\DeclareMathOperator{\assoc}{Associator}
\title{$A_\infty$-Algebras Derived from Associative Algebras with a Non-Derivation Differential}
\author{Kaj B\"orjeson}
\date{}                                           
\begin{document}

\maketitle

\begin{abstract}
Given an associative graded algebra equipped with a degree $+1$ differential $\Delta$ we define an $A_\infty$-structure that measures the failure of $\Delta$ to be a derivation. This can be seen as a non-commutative analog of generalized BV-algebras. In that spirit we introduce a notion of associative order for the operator $\Delta$ and prove that it satisfies properties similar to the commutative case. In particular when it has associative order $2$ the new product is a strictly associative product of degree $+1$ and there is a compatibility between the products, similar to ordinary BV-algebras. We consider several examples of structures obtained in this way. In particular we obtain an $A_\infty$-structure on the bar complex of an $A_\infty$-algebra that is strictly associative if the original algebra is strictly associative. We also introduce strictly associative degree $+1$ products for any degree $+1$ action on a graded algebra. Moreover, an $A_\infty$-structure is constructed on the Hochschild cocomplex of an associative algebra with a non-degenerate inner product by using Connes' B-operator.
\end{abstract}



\theoremstyle{plain}
\newtheorem{theorem}{Theorem}
\newtheorem{proposition}{Proposition}
\newtheorem{lemma}{Lemma}
\newtheorem{corollary}{Corollary}

\theoremstyle{definition}
\newtheorem{definition}{Definition}
\newtheorem{example}{Example}

\theoremstyle{remark}
\newtheorem{remark}{Remark}

\section*{Introduction}
Consider a graded commutative algebra equipped with a degree $+1$ differential $\Delta$. There is an $L_\infty$-structure measuring the failure of $\Delta$ to be a derivation. The $L_\infty$-operations are called the Koszul hierarchy, see \cite{Akman} \cite{Koszul}, and are defined as $b_n(a_1,\dots,a_n)=[[[\Delta,L_{a_1}],\dots],L_{a_n}](1)$ where the brackets are commutators of operators and $L_a$ is the operator multiplying by $a$ from the left. Unwrapping this compact definition we see that the first few operations are: $b_1(a)=\Delta(a)$, $b_2(a,b)=\Delta(ab)-\Delta(a)b-(-1)^{|a|}a\Delta(b)$ and $b_3(a,b,c)= \Delta(abc)-\Delta(ab)c-(-1)^{|a|}a\Delta(bc)-(-1)^{(|a|+1)|b|}b\Delta(ac)+\Delta(a)bc+(-1)^{|a|}a\Delta(b)c+(-1)^{|a|+|b|}ab\Delta(c)$. Disregarding signs from element degrees; the operations are a sum over all different ways of applying $\Delta$ to a subset of the inputs with minus signs if there is an odd number of elements outside $\Delta.$ The data of the commutative product and the operator $\Delta$ is called a generalized BV-algebra. Note that no compatibility is assumed and that there is no explicit mention of Lie brackets. If we require that $b_3=0$ we obtain the definition of an ordinary BV-algebra. Put differently, that $b_3=0$ is equivalent to that $\Delta$ is a second order operator or that $b_2$ is a strict degree $+1$ Lie bracket. Saying that $b_{n+1}=0$ is equivalent to $\Delta$ being of $n$:th order. However, if the algebra is not graded commutative the operations $b_n$ does not form an $L_\infty$-structure, see \cite{Bering}. In this note we define a set of operations $m_n$ for an arbitrary graded associative algebra with a degree $+1$ differential $\Delta$. We prove that these operations form an $A_\infty$-structure measuring the failure of $\Delta$ to be a derivation. In analogy with the commutative case we define a notion of \emph{associative order} of the operator $\Delta$ by saying that it has associative order $n$ if $m_{n+1}=0.$ In case $\Delta$ has associative order $2$ or equivalently that $m_3=0$, the operation $m_2$ is a strict degree $+1$ graded associative algebra and it turns out that there is extra compatibility between the products. We consider the combined structure of the two different degree products and $\Delta$ as a non-commutative analog of BV-algebras. Forgetting the operator $\Delta$ yields a non-commutative analog of Gerstenhaber algebras. Note that it does not reduce to the usual notion of BV-algebra in the case the starting algebra is commutative. It should perhaps rather be seen as a BV-algebra in the "associative world" in the sense of \cite{Kontsevich}. 

Any $A_\infty$-algebra determines and is determined by the bar differential on the tensor (co)-algebra of the underlying complex. The bar differential is a coderivation of the coproduct but it is not a derivation with respect to the tensor product. We apply our construction in this case and obtain an $A_\infty$-structure on the tensor module that is strict in case the original $A_\infty$-structure is strict.

 Consider an odd element and a degree $0$ graded associative multiplication on a complex $V$. As an easy example we obtain an degree $+1$ graded associative algebra structure from our construction by letting $\Delta$ be the left multiplication of the odd element. The same construction yields a strict degree $+1$ associative product from any degree $+1$ left action on an associative algebra.

Consider the Hochschild cocomplex of an associative algebra with an invariant non-degenerate bilinear form. The form allows us to move Connes' $B$-operator from the complex to the cocomplex. Applying our construction with this operator and the cup product yields an $A_\infty$-structure. It turns out that $m_2$ is (up to a sign) the Gerstenhaber bracket.

\section*{Conventions}


\begin{definition}
Let $A$ be a graded associative algebra over a field $\mathbb{K}$ of characteristic zero, such that the underlying $\mathbb{Z}$-graded vector space is a complex, that is, it has a degree $+1$ linear operator $\Delta$ such that $\Delta^2=0.$ We call this an algebra with differential. 
\end{definition}

\begin{remark}
Note that we do not require any compatibility between $\Delta$ and the multiplication. The symbol $\Delta$ is chosen to remind of the odd Laplacian operator in a BV-algebra structure.
\end{remark}

\begin{remark}
We work in the symmetric monoidal category of complexes, thus we employ the Koszul sign rule. That is, when we permute homogeneous odd elements we multiple by $-1.$
\end{remark}

\begin{definition}
Let $A$ be an algebra with differential $\Delta$ and product \\${\gamma_2:A^{\otimes 2}\rightarrow A}$. 
Suppose that $\Delta( \gamma_2) - \gamma_2(\Delta\otimes id) -\gamma_2(id \otimes \Delta)=0.$ Then we call $\Delta$ a derivation and $A$ a differential graded algebra.
\end{definition}

\begin{definition}
Suppose $C$ is a graded vector space. Now let $T(C)=\bigoplus_{n=1}^\infty C^{\otimes n}.$ $T(C)$ is an associative algebra with the product given by concatenation of tensor words. It is also a coassociative coalgebra given by the sum over all ways to split a tensor word in two without permuting any elements.
\end{definition}

\begin{definition}
Suppose $A$ is a $\mathbb{Z}$-graded vector space with a degree $+1$ differential $\Delta$. An $A_\infty$-structure on $A$ is a collection $\{a_n\}_{n \geq 2}$ of degree $2-n$ maps $$a_n:A^{\otimes n}\rightarrow A$$ such that the following identity is satisfied for every $n$ (where we put $a_1=\Delta$).

$$\sum_{n=i+j+k\atop k\geq 1, i,j \geq 0}(-1)^{i+jk}a_{i+1+j}(id^{\otimes i}\otimes a_k \otimes id^{\otimes j})=0$$ Equivalently, we can define the structure on the shifted space $A[1]$.

An $A_\infty$-structure on $A[1]$ is a collection $\{m_n\}_{n \geq 2}$ of degree $+1$ maps $$m_n:A^{\otimes n}\rightarrow A$$ such that the following identity is satisfied for every $n$ (where we put $m_1=\Delta$).

$$\sum_{n=i+j+k\atop k\geq 1, i,j \geq 0}m_{i+1+j}(id^{\otimes i}\otimes m_k \otimes id^{\otimes j})=0$$
\end{definition}

\begin{remark}
An $A_\infty$-structure on $A$ where $a_k$ vanishes for $k\neq 2$ is an ordinary graded associative product on $A$.  An $A_\infty$-structure on $A$ where $a_k$ vanishes for $k>2$ is a differential graded algebra. An $A_\infty$-structure on $A[1]$ with $m_k=0$ for $k\neq 2$ is an associative algebra with product of degree $+1$ on $A$. Since the main construction of this note deals with the interplay of products of different degrees we cannot regrade to get rid of the products with odd degree. No matter how we choose it some product will be more complicated. We prefer to construct an $A_\infty$-structure on $A[1]$ to avoid the presence of too complicated signs in the identities we have to check.
\end{remark}

\section*{$A_\infty$-structure from Non-Derivation Differential}
\begin{theorem}
\label{main theorem}
Let $A$ be a graded associative algebra with differential $\Delta$.  Denote multiplication of $n$ ordered elements by the map $\gamma_n:A^{\otimes n} \rightarrow A$.  There is an $A_\infty$-structure on $A[1]$ given by \begin{equation*}m_1=\Delta, \end{equation*} \begin{equation*}m_2= \Delta( \gamma_2) - \gamma_2(\Delta\otimes id) -\gamma_2(id \otimes \Delta), \end{equation*} and  \begin{equation*} m_n=\Delta(\gamma_n)-\gamma_2(\Delta(\gamma_{n-1})\otimes id) -\gamma_2(id\otimes\Delta(\gamma_{n-1}))+\gamma_3(id\otimes \Delta(\gamma_{n-2}) \otimes id),  \end{equation*} for $n\geq 3.$
\end{theorem}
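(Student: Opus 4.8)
The plan is to pass to the tensor coalgebra and exhibit the entire $A_\infty$-structure as a conjugate of the internal differential, so that the structure relations become automatic. Write $C=A[1]$ and let $T^c(C)=\bigoplus_{n\geq 1}C^{\otimes n}$ carry the deconcatenation coproduct. Since $T^c(C)$ is cofree, a coalgebra endomorphism is determined by its corestriction $T^c(C)\to C$ and a coderivation by its components $C^{\otimes n}\to C$. An $A_\infty$-structure on $C$ with operations $m_n$ is precisely a degree $+1$ coderivation $M$ with $\pi_1 M|_{C^{\otimes n}}=m_n$ satisfying $M^2=0$, and $M^2=0$ unravels exactly into the displayed relations $\sum m_{i+1+j}(id^{\otimes i}\otimes m_k\otimes id^{\otimes j})=0$. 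It therefore suffices to produce a single coderivation $M$ whose components are the stated maps $m_n$ and which squares to zero.

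The key step is to build $M$ so that $M^2=0$ holds for free. Let $B$ be the coderivation lifting $\Delta$, applying $\Delta$ in each tensor slot; since $\Delta^2=0$ we have $B^2=0$. Let $\phi\colon T^c(C)\to T^c(C)$ be the coalgebra automorphism whose corestriction is total multiplication, $\pi_1\phi|_{C^{\otimes n}}=\gamma_n$ with $\gamma_1=id$; concretely $\phi$ sends a tensor word to the sum over all ways of grouping it into consecutive blocks and multiplying each block. The length-preserving part of $\phi$ is the identity and every other part strictly lowers tensor length, so $\phi=id+(\text{length-decreasing})$ is invertible on each $C^{\otimes n}$. Setting $M:=\phi^{-1}B\phi$, a conjugate of a coderivation by a coalgebra automorphism is again a coderivation, and $M^2=\phi^{-1}B^2\phi=0$ immediately. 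This reduces the theorem to the combinatorial task of checking that the components of $\phi^{-1}B\phi$ are the four-term maps in the statement.

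For that identification I would use $\phi M=B\phi$. Applying $\pi_1$, and using that $\phi$ is a coalgebra map while $M$ is a coderivation, gives on $C^{\otimes n}$ the recursion \[\Delta\gamma_n=\sum_{k\geq 1}\gamma_k\circ\Big(\sum_{\text{consecutive chunk of size }n-k+1} id^{\otimes a}\otimes m_{n-k+1}\otimes id^{\otimes b}\Big),\] in which the chunk equal to the whole word contributes $m_n$ (as $\gamma_1=id$), so that $m_n=\Delta\gamma_n-\sum_{\text{proper consecutive chunks}}\gamma(\dots m_p\dots)$. Feeding the claimed four-term formula into the right-hand side and repeatedly applying associativity, the contributions of the interior $m_1=\Delta$ chunks and of the interior higher $m_p$ telescope, leaving exactly $\Delta\gamma_n-\gamma_2(\Delta\gamma_{n-1}\otimes id)-\gamma_2(id\otimes\Delta\gamma_{n-1})+\gamma_3(id\otimes\Delta\gamma_{n-2}\otimes id)$; I have verified this collapse for $n=2,3$, and in general it is an inclusion--exclusion over the two boundary blocks.

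I expect the main obstacle to be twofold. First, carrying out the telescoping cleanly for all $n$: after full expansion every summand carries exactly two copies of $\Delta$, the nested ones (outer $\Delta$ applied across the inner $\Delta$-block) collapse using $\Delta^2=0$ and associativity, while the separated ones (a product of two disjoint $\Delta$-images) must be matched in cancelling pairs. Second, tracking the Koszul signs so that the components of $\phi^{-1}B\phi$ agree on the nose with the sign-free form of the $A_\infty$-relation on $A[1]$; the separated-$\Delta$ terms in particular cancel only because of the sign incurred when commuting the two odd outputs of $\Delta$. A direct alternative, bypassing $\phi$ altogether, is to substitute the explicit four-term formula into the $A_\infty$-relation for each fixed total arity and cancel by hand along the same two mechanisms; this needs no coalgebra machinery but is more calculation. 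I would present the conjugation argument as the conceptual proof and use the direct check to pin down the signs.
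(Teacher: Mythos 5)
Your conjugation argument is correct, but it takes a genuinely different route from the paper's. The paper proves the (more general) Theorem \ref{assoc theorem} by direct computation: it substitutes the four-term formulas into $\sum m_{i+1+j}(id^{\otimes i}\otimes m_k\otimes id^{\otimes j})$ and sorts the resulting composites into exactly your two species --- ``separated'' terms $\gamma(\dots\otimes\Delta(\gamma_j)\otimes\dots\otimes\Delta(\gamma_\ell)\otimes\dots)$, which cancel in pairs via the Koszul sign incurred when the two odd $\Delta$'s pass each other (its Case 1), and ``nested'' terms, whose coefficients vanish termwise from equations \ref{m1}, \ref{m2}, \ref{mn} except for the surviving $\Delta^2$ contributions (its Case 2) --- so your ``direct alternative'' is precisely the paper's proof. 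Your primary route, $M=\phi^{-1}B\phi$ with $\phi$ the coalgebra automorphism with corestrictions $\gamma_n$ and $B$ the coderivation lifting $\Delta$, is sound: $\phi$ is unitriangular in word length and hence invertible, $M$ is a coderivation with $M^2=\phi^{-1}B^2\phi=0$, and the recursion $\Delta\gamma_n=\sum_{p}\sum_{a+b=n-p}\gamma_{a+1+b}(id^{\otimes a}\otimes m_p\otimes id^{\otimes b})$ is indeed solved by the four-term formula: expanding each $m_p$ and using associativity, a composite $\gamma_{c+1+d}(id^{\otimes c}\otimes\Delta(\gamma_q)\otimes id^{\otimes d})$ acquires coefficient $1-1-1+1=0$ when $c,d\geq 1$, coefficient $1-1=0$ when exactly one of $c,d$ is zero, and coefficient $1$ only for $c=d=0$, leaving $\Delta\gamma_n$. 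The conjugation buys a lot: $M^2=0$ is automatic, the case $\Delta^2\neq 0$ (Theorem \ref{assoc theorem}) follows because $B^2$ is the coderivation lifting $\Delta^2$, and the structure is exhibited as gauge-equivalent to the bare differential, which makes the later triviality-on-cohomology statement transparent; the paper's computation is in exchange elementary and keeps every Koszul sign in plain view. One correction to your ``obstacles'' paragraph: the telescoping needed to identify the components of $\phi^{-1}B\phi$ involves exactly one $\Delta$ per summand and uses neither $\Delta^2=0$ nor any pairwise cancellation of disjoint $\Delta$-images; the two-$\Delta$ cancellation mechanisms you describe there belong to the direct verification of the $A_\infty$-relation, not to the component identification, and you should keep the two computations separate in the write-up.
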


Actually the proof gives a bit more. We have the following a bit more elegant and general result. Looking at the associators of operations $m_n$ can be seen as taking a kind of square. The theorem says that this operation yields the same result as squaring the operator $\Delta$ first. In the case $\Delta^2=0$ it reduces to the previous theorem. This formulation is analogous to a result in the commutative case presented in \cite{Voronov}.

\begin{theorem}
\label{assoc theorem}
Let $A$ be a graded associative algebra with a degree $+1$ operator $\Delta$, not necessarily satisfying $\Delta^2=0$. Denote multiplication of $n$ ordered elements by the map $\gamma_n:A^{\otimes n} \rightarrow A.$ Define maps  \begin{equation}m_{\Delta,1}=\Delta, \label{m1} \end{equation} \begin{equation}m_{\Delta,2}= \Delta( \gamma_2) - \gamma_2(\Delta\otimes id) -\gamma_2(id \otimes \Delta), \label{m2} \end{equation} and  \begin{equation} m_{\Delta,n}=\Delta(\gamma_n)-\gamma_2(\Delta(\gamma_{n-1})\otimes id) -\gamma_2(id\otimes\Delta(\gamma_{n-1}))+\gamma_3(id\otimes \Delta(\gamma_{n-2}) \otimes id), \label{mn} \end{equation} for $n\geq 3.$ Now let $$\assoc_{m_\Delta,n}:= \sum_{n=i+j+k\atop k\geq 1, i,j \geq 0}m_{\Delta,i+1+j}(id^{\otimes i}\otimes m_{\Delta,k} \otimes id^{\otimes j}).$$ Then we have the identity $$\assoc_{m_\Delta,n}=m_{\Delta^2,n}.$$
\end{theorem}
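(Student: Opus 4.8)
The plan is to expand $\assoc_{m_\Delta,n}$ completely and to sort the resulting terms according to the configuration of the two copies of $\Delta$ that each summand carries. To organize this I would first abbreviate $D_k := \Delta\gamma_k$, so that \eqref{m1}--\eqref{mn} read $m_{\Delta,n} = D_n - \gamma_2(D_{n-1}\otimes\mathrm{id}) - \gamma_2(\mathrm{id}\otimes D_{n-1}) + \gamma_3(\mathrm{id}\otimes D_{n-2}\otimes\mathrm{id})$, with the last term absent for $n\le 2$ and only the first present for $n=1$. Each summand of each $m_{\Delta,k}$ applies a single $\Delta$ to one block of consecutive arguments and then multiplies the result with the untouched outer arguments. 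Consequently every summand of a composite $m_{\Delta,i+1+j}(\mathrm{id}^{\otimes i}\otimes m_{\Delta,k}\otimes\mathrm{id}^{\otimes j})$ carries exactly two copies of $\Delta$, an inner one from $m_{\Delta,k}$ and an outer one from $m_{\Delta,i+1+j}$. Throughout I would repeatedly use associativity of $\gamma$ to flatten nested products (for instance $\gamma_2(\gamma_p\otimes\mathrm{id})=\gamma_{p+1}$), so that each term becomes an honest product of factors in which one or two blocks carry a $\Delta$.

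This produces a clean dichotomy. Either the outer $\Delta$ is applied to a single slot that is exactly the output of the inner $D_k$-summand, in which case the two copies of $\Delta$ are adjacent and compose to $\Delta^2$ on one block (the \emph{type A} terms); or the two copies sit on two distinct blocks --- disjoint, or nested with further material between them, or with one operation contributing only a bare product factor --- and never compose (the \emph{type B} terms). Since $\Delta^2$ never appears in a type B term, the vanishing of the type B contribution is a purely combinatorial fact, independent of whether $\Delta^2=0$.

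For the type A terms I would check that the four ways the outer $\Delta$ can land on a single combined slot force $\ell:=i+1+j\in\{1,2,2,3\}$ and pin down $(i,j,k)$ uniquely: the summand $D_\ell$ with $\ell=1$, $i=j=0$, $k=n$ gives $\Delta^2\gamma_n$; the summands $\gamma_2(D_{\ell-1}\otimes\mathrm{id})$ and $\gamma_2(\mathrm{id}\otimes D_{\ell-1})$ with $\ell=2$, $k=n-1$ give $-\gamma_2(\Delta^2\gamma_{n-1}\otimes\mathrm{id})$ and $-\gamma_2(\mathrm{id}\otimes\Delta^2\gamma_{n-1})$; and $\gamma_3(\mathrm{id}\otimes D_{\ell-2}\otimes\mathrm{id})$ with $\ell=3$, $i=j=1$, $k=n-2$ gives $+\gamma_3(\mathrm{id}\otimes\Delta^2\gamma_{n-2}\otimes\mathrm{id})$. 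Because the combined block is a block of consecutive inputs, no other placement of the free outer arguments is possible, so these four configurations arise exactly once each and no other summand produces a $\Delta^2$. Moreover the outer $\Delta$ immediately follows the inner one on the same slot, so no arguments are permuted in the merging and no Koszul sign intervenes; the signs are inherited verbatim from the coefficients in \eqref{m2}--\eqref{mn}. Comparing with \eqref{m1}--\eqref{mn} applied to $\Delta^2$, these four contributions are precisely $m_{\Delta^2,n}$.

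It remains to show the type B terms cancel. Here I would argue that a type B term is completely determined by its combinatorial data --- the two consecutive blocks carrying the two $\Delta$'s together with the bracketing of the remaining free factors --- and that each such configuration is produced by exactly two of the summands indexed by $(i,j,k)$, with opposite signs coming from the $\pm$ pattern of \eqref{m2}--\eqref{mn}. Matching these pairs, and verifying that the Koszul signs produced by transporting the two degree $+1$ operators past the intervening arguments agree, is the main obstacle: the sign bookkeeping is delicate precisely because here the two $\Delta$'s are separated by genuine factors, whereas in the type A case they were adjacent. Finally, specializing $\Delta^2=0$ collapses the type A contribution and recovers the $A_\infty$-relations of Theorem~\ref{main theorem}, so the type B cancellation is exactly the mechanism underlying the main theorem.
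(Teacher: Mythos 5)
Your overall strategy is the same as the paper's: expand the associator, classify each summand by the relative position of the two copies of $\Delta$, show that the configurations where they compose assemble into $m_{\Delta^2,n}$, and show that all remaining configurations cancel. Your type A analysis is correct and complete: the four composites that can place the outer $\Delta$ on exactly the output slot of the inner operation are the ones you list, no Koszul sign intervenes, and the inherited coefficients $+1,-1,-1,+1$ reproduce \eqref{m1}--\eqref{mn} applied to $\Delta^2$; this matches the paper's treatment of its $j=\ell=0$ subcase.

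The gap is in type B, which is where essentially all of the work lies and which you explicitly defer; moreover the structural claims you make about it are incorrect as stated. It is not true that every type B configuration arises from exactly two summands: a nested configuration of the form $\gamma_{i+1+m}\bigl(id^{\otimes i}\otimes \Delta(\gamma_{j+1+\ell}(id^{\otimes j}\otimes \Delta(\gamma_k)\otimes id^{\otimes \ell}))\otimes id^{\otimes m}\bigr)$ with $j,\ell\geq 1$ receives \emph{four} contributions, of the shapes $m_{\Delta,1}(m_{\Delta,\ast})$, $m_{\Delta,2}(id,m_{\Delta,\ast})$, $m_{\Delta,2}(m_{\Delta,\ast},id)$ and $m_{\Delta,3}(id,m_{\Delta,\ast},id)$, whose coefficients $+1,-1,-1,+1$ sum to zero with no Koszul sign involved. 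Conversely, for the disjoint configurations $\gamma_{\ast}(\dots\otimes\Delta(\gamma_j)\otimes\dots\otimes\Delta(\gamma_\ell)\otimes\dots)$ the two contributing composites carry the \emph{same} definitional coefficient and cancel only because the two odd operators pass each other in one of the two evaluations; there the relative minus sign is the Koszul sign, not the $\pm$ pattern of \eqref{m2}--\eqref{mn}. So the two cancellation mechanisms are interchanged in your sketch, and neither is actually verified. The paper's proof consists precisely of enumerating the finitely many disjoint shapes (which exist only for $i,k,m\leq 1$) and the nested shapes, and checking these cancellations case by case; without that enumeration and the correct sign bookkeeping the argument is incomplete.
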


\begin{proof}
For every $n$ we have to check the identity $$\sum_{n=i+j+k\atop k\geq 1, i,j \geq 0}m_{\Delta,i+1+j}(id^{\otimes i}\otimes m_{\Delta,k} \otimes id^{\otimes j})=m_{\Delta^2,n}.$$
Every term is either of the form (Case 1) $$\gamma_{i+1+k+1+m}(id^{\otimes i}\otimes \Delta(\gamma_j) \otimes id^{\otimes k} \otimes\Delta(\gamma_\ell)\otimes id^{\otimes m})$$ or of the form (Case 2)$$\gamma_{i+1+m}(id^{\otimes i}\otimes \Delta(\gamma_{j+1+\ell}(id^{\otimes j}\otimes \Delta(\gamma_k)\otimes id^{\otimes \ell} ))\otimes id^{\otimes m}).$$ 
We will prove the identity by checking that the coefficient in front of every type of term not containing $\Delta^2$ vanishes and that the coefficients of the terms with $\Delta^2$ agree.

\subsection*{Case 1}
{
We look at the coefficient in front of $$\gamma_{i+1+k+1+m}(id^{\otimes i}\otimes \Delta(\gamma_j) \otimes id^{\otimes k} \otimes\Delta(\gamma_\ell)\otimes id^{\otimes m})$$ for fixed $i,j,k,\ell,m.$ In the definition of the product there are no non-zero terms where there are more than one $id$ in front or more than id one $id$ behind $\Delta$. From this we see that the coefficient in front of $\gamma_{i+1+k+1+m}(id^{\otimes i}\otimes \Delta(\gamma_j) \otimes id^{\otimes k} \otimes\Delta(\gamma_\ell)\otimes id^{\otimes m})$ vanishes unless $i,k,m\leq 1.$ 

Therefore it remains to check the following terms: $\gamma_{2}(\Delta(\gamma_j)\otimes\Delta(\gamma_\ell))$, \\ ${\gamma_{3}(id \otimes\Delta(\gamma_j)\otimes\Delta(\gamma_\ell))}$, ${\gamma_{3}(\Delta(\gamma_j) \otimes\Delta(\gamma_\ell)\otimes id)}$, ${\gamma_{3}( \Delta(\gamma_j) \otimes id \otimes\Delta(\gamma_\ell))},$ \\${\gamma_{4}(id\otimes \Delta(\gamma_j) \otimes id \otimes \Delta(\gamma_\ell))},$ ${\gamma_{4}( \Delta(\gamma_j) \otimes id \otimes\Delta(\gamma_\ell)\otimes id)},$ \\${\gamma_{4}(id\otimes \Delta(\gamma_j) \otimes \Delta(\gamma_\ell)\otimes id)},$ and ${\gamma_{5}(id\otimes \Delta(\gamma_j) \otimes id \otimes\Delta(\gamma_\ell)\otimes id)}.$ 

The term $\gamma_{2}(\Delta(\gamma_j)\otimes\Delta(\gamma_\ell))$ has contributions from $m_{\Delta,\ell+1}(m_{\Delta,j},id^{\otimes \ell})$ and \\$m_{\Delta,j+1}(id^{\otimes j},m_{\Delta,\ell})$. They contribute $+1$ and $-1$ respectively; they have different signs because the $\Delta$:s pass each other when calculating one of the terms. 

The term $\gamma_{3}(id \otimes \Delta(\gamma_j) \otimes\Delta(\gamma_\ell))$ has contributions from $m_{\Delta,j+2}(id^{\otimes j+1},m_{\Delta,\ell})$ and $m_{\Delta,\ell+1}(m_{\Delta,j+1},id^{\otimes \ell})$. They contribute with opposite signs; again using the Koszul sign rule. 

The term $\gamma_{3}(\Delta(\gamma_j) \otimes\Delta(\gamma_\ell)\otimes id)$ vanishes similarly.

The term $\gamma_{3}( \Delta(\gamma_j) \otimes id \otimes\Delta(\gamma_\ell)),$ has contributions from $m_{\Delta,\ell+1}(m_{\Delta,j+1},id^{\otimes \ell})$ and $m_{\Delta,j+1}(id^{\otimes j},m_{\Delta,\ell+1})$, again canceling.

The term $\gamma_{4}(id\otimes \Delta(\gamma_j) \otimes id \otimes \Delta(\gamma_\ell))$ has contributions by $m_{\Delta,j+2}(id^{\otimes j+1},m_{\Delta,\ell+1})$ and $m_{\Delta,\ell+1}(m_{\Delta,j+2},id^{\otimes \ell})$. These cancel by the Koszul sign rule.

The term $\gamma_{4}( \Delta(\gamma_j) \otimes id \otimes\Delta(\gamma_\ell)\otimes id)$ vanishes similarly.

The term $\gamma_{4}(id\otimes \Delta(\gamma_j) \otimes \Delta(\gamma_\ell)\otimes id)$ has contributions from $m_{\Delta,j+2}(id^{\otimes j+1},m_{\Delta,\ell+1})$ and $m_{\Delta,\ell+2}(m_{\Delta,j+1},id^{\otimes \ell+1})$, also vanishing.

The term $\gamma_{5}(id\otimes \Delta(\gamma_j) \otimes id \otimes\Delta(\gamma_\ell)\otimes id)$ has contributions from \\$m_{\Delta,\ell+2}(m_{\Delta,j+2},id^{\otimes \ell+1})$ and $m_{\Delta,j+2}(id^{\otimes j+1},m_{\Delta,\ell+2})$ and thus also vanishes by the Koszul rule. }

\subsection*{Case 2}
We now look at the coefficient in front of $$\gamma_{i+1+m}(id^{\otimes i}\otimes \Delta(\gamma_{j+1+\ell}(id^{\otimes j}\otimes \Delta(\gamma_k)\otimes id^{\otimes \ell} ))\otimes id^{\otimes m}),$$ where either $j$ or $\ell$ is non-zero. Changing $i$ or $m$ only multiplies it with $1$, $-1$ or $0$ so it is enough to check the vanishing of coefficient for cases of the form $\Delta(\gamma_{j+1+\ell}(id^{\otimes j}\otimes \Delta(\gamma_k)\otimes id^{\otimes \ell} )).$ 

The term $\Delta(\gamma_{2}(id\otimes \Delta(\gamma_k) ))$ has contributions from $m_{\Delta,1}(m_{\Delta,k+1})$ and from \\$m_{\Delta,2}(id,m_{\Delta,k})$ which vanishes by equation \ref{m1} and \ref{m2}.

The term $\Delta(\gamma_2(\Delta(\gamma_k)\otimes id))$ vanishes similarly.

For $j,\ell \geq 1$ the term $\Delta(\gamma_{j+1+\ell}(id^{\otimes j}\otimes \Delta(\gamma_k)\otimes id^{\otimes \ell} ))$ has contributions from $m_{\Delta,1}(m_{\Delta,k+2})$, $m_{\Delta,2}(id,m_{\Delta,k+1})$, $m_{\Delta,2}(m_{\Delta,k+1},id)$ and from $m_{\Delta,3}(id,m_{\Delta,k},id).$ The sum of the contributing coefficients vanishes by equation \ref{m1}, \ref{m2} and \ref{mn}.

When $j=\ell=0$ it does not necessarily vanish. We want to prove that the coefficient from $$\sum_{n=i+j+k\atop k\geq 1, i,j \geq 0}m_{\Delta,i+1+j}(id^{\otimes i}\otimes m_{\Delta,k} \otimes id^{\otimes j})$$ is the same as the coefficient from $m_{\Delta^2,n}.$ But in this case there is only one contributing term on both sides, the coefficent comes from the definition of $m_{*,i}$ which is the same in both cases.
\end{proof}

\section*{Failure of Being a Derivation and Associative Order of Operators}

\begin{lemma}
\label{order lemma}
Suppose $A$ is a graded associative algebra with a differential $\Delta$ and let $m_n$ be as in Theorem \ref{main theorem}. If $m_n=0,$ then $m_i=0$ for $i>n.$
\end{lemma}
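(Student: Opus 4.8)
The plan is to reduce the statement to a single inductive step: it suffices to prove that $m_n=0$ forces $m_{n+1}=0$, for then the claim $m_i=0$ for all $i>n$ follows by induction on $i$. To carry out this step I would look for a recursion expressing $m_{n+1}$ entirely through $m_n$, so that the vanishing of $m_n$ immediately propagates. Multiplying the two leftmost arguments, the natural candidate (valid for $n\geq 2$) is the map identity
$$m_{n+1}=m_n(\gamma_2\otimes id^{\otimes(n-1)})-\gamma_2(id\otimes m_n),$$
which on homogeneous elements reads
$$m_{n+1}(a_1,\dots,a_{n+1})=m_n(a_1a_2,a_3,\dots,a_{n+1})-(-1)^{|a_1|}a_1\, m_n(a_2,\dots,a_{n+1}).$$
Both summands on the right factor through $m_n$, which is exactly what is needed. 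The residual case $n=1$ is trivial, since $m_1=\Delta=0$ and every $m_i$ is built from $\Delta$.

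To verify the recursion I would write each $m_k$ out explicitly in terms of $\Delta$ applied to products, using that $\gamma_2(\Delta(\gamma_{k-1})\otimes id)$ evaluates to $\Delta(a_1\cdots a_{k-1})a_k$ and similarly for the other terms, so that for $k\geq 3$
$$m_k(a_1,\dots,a_k)=\Delta(a_1\cdots a_k)-\Delta(a_1\cdots a_{k-1})a_k-(-1)^{|a_1|}a_1\Delta(a_2\cdots a_k)+(-1)^{|a_1|}a_1\Delta(a_2\cdots a_{k-1})a_k.$$
The key observation is that substituting the product $a_1a_2$ into the first slot raises the arity of every ``$\Delta$-of-a-product'' term by one (a telescoping effect), so that the fully associative terms $\Delta(a_1\cdots a_{n+1})$ and $\Delta(a_1\cdots a_n)a_{n+1}$ reproduce the first two terms of $m_{n+1}$. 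Expanding $(-1)^{|a_1|}a_1\,m_n(a_2,\dots,a_{n+1})$ in the same way, the two ``interior'' terms carrying a factor $a_1a_2$ that brackets a $\Delta$, namely $(-1)^{|a_1|+|a_2|}a_1a_2\Delta(a_3\cdots a_{n+1})$ and $(-1)^{|a_1|+|a_2|}a_1a_2\Delta(a_3\cdots a_n)a_{n+1}$, occur in both summands with opposite signs and cancel, leaving precisely the four terms of $m_{n+1}(a_1,\dots,a_{n+1})$. The base case $n=2$ should be checked separately, since $m_2$ lacks the final $\gamma_3$ term, but the same cancellation pattern applies.

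With the recursion established the conclusion is immediate: if $m_n=0$, then $m_n(a_1a_2,a_3,\dots,a_{n+1})=0$ and $m_n(a_2,\dots,a_{n+1})=0$ for all homogeneous inputs, whence $m_{n+1}=0$, and induction completes the proof. I expect the only genuine obstacle to be the Koszul sign bookkeeping when $\Delta$, of degree $+1$, is moved past the leftmost argument; this is purely mechanical. In fact the precise signs are irrelevant to the final statement, since \emph{any} recursion presenting $m_{n+1}$ as a combination of evaluations of $m_n$ would already force $m_{n+1}=0$ once $m_n=0$.
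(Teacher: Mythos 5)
Your proposal is correct and follows essentially the same route as the paper: an induction whose step expresses $m_{n+1}$ through $m_n$ by feeding a product of two adjacent arguments into one slot, using associativity to rewrite $\gamma_{n+1}$ via $\gamma_2$. The only difference is cosmetic: you multiply the first two arguments and therefore need the correction term $-\gamma_2(id\otimes m_n)$, whereas the paper inserts $\gamma_2$ into an \emph{interior} slot, which for $n\geq 3$ gives the even cleaner identity $m_{n+1}=m_n(id\otimes\gamma_2\otimes id^{\otimes n-2})$ with no correction term at all (the low cases $n=1,2$ being handled separately, as you also do).
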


\begin{proof}
Suppose $m_1=\Delta=0,$ then it is clear that all $m_n$ vanishes since all terms use $\Delta.$ Suppose instead that $m_2= \Delta( \gamma_2) - \gamma_2(\Delta\otimes id) -\gamma_2(id \otimes \Delta)=0.$ We want to show that $m_3=\Delta(\gamma_3)-\gamma_2(\Delta(\gamma_{2})\otimes id) -\gamma_2(id\otimes\Delta(\gamma_{2}))+\gamma_3(id\otimes \Delta(id) \otimes id)=0.$ By writing $\Delta(\gamma_3)=\Delta(\gamma_2(\gamma_2\otimes id))$ and using that $m_2=0,$ we see that $m_3$ vanishes. Now suppose $n\geq 4$ and that $m_{n-1}$ vanishes. We want to show that $m_n=\Delta(\gamma_n)-\gamma_2(\Delta(\gamma_{n-1})\otimes id) -\gamma_2(id\otimes\Delta(\gamma_{n-1}))+\gamma_3(id\otimes \Delta(\gamma_{n-2}) \otimes id)=0$. Similarly to the previous case we rewrite $\Delta(\gamma_n)=\Delta(\gamma_{n-1}(id\otimes \gamma_2 \otimes id^{\otimes n-3})) $ and use that $m_{n-1}=0$ to see that $m_n$ also vanishes. Now the lemma follows by induction.
\end{proof}

This lemma motivates the following definition inspired by the commutative case.

\begin{definition}
Suppose $A$ is a graded associative algebra with a differential $\Delta$. We say that $\Delta$ has \textit{associative order} $n$ if $m_{n+1}$ vanishes.
\end{definition}

\begin{remark}
An operator of associative order $1$ is the same thing as a derivation, thus we can look at the $A_\infty$-structure as measuring the failure of $\Delta$ to be a derivation.
\end{remark}


The next theorem shows that there is a compatibility in the case when the operator has associative order $2$. This is analogous to an ordinary BV-algebra and the Gerstenhaber part of it.

\begin{theorem}
\label{compatibility theorem}
Suppose $A$ is a graded associative algebra with multiplication $\gamma_2$ and differential $\Delta$ of associative order $2$.  Then the identities $$\gamma_2(id,m_2)= m_2(\gamma_2,id)$$ and $$\gamma_2(m_2,id)=m_2(id,\gamma_2),$$ hold.
\end{theorem}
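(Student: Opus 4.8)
The plan is to deduce both identities from the single hypothesis $m_3 = 0$, which is exactly what \emph{associative order $2$} means by definition. Writing out $m_3$ from equation \ref{mn} with $n = 3$, where $\gamma_{n-2} = \gamma_1 = id$, gives
$$m_3 = \Delta(\gamma_3) - \gamma_2(\Delta(\gamma_2)\otimes id) - \gamma_2(id\otimes\Delta(\gamma_2)) + \gamma_3(id\otimes\Delta\otimes id).$$
For each of the two identities I would expand the two composite maps it relates using only the definition of $m_2$ in equation \ref{m2}, the associativity of $\gamma_2$ in the form $\gamma_2(\gamma_2\otimes id) = \gamma_3 = \gamma_2(id\otimes\gamma_2)$, and the interchange law for tensor products of maps, and then check that the difference of the two sides is exactly $-m_3$, hence zero.

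Concretely, for the first identity I would substitute $m_2 = \Delta(\gamma_2) - \gamma_2(\Delta\otimes id) - \gamma_2(id\otimes\Delta)$ into $m_2(\gamma_2\otimes id)$, which collapses to the three terms $\Delta(\gamma_3) - \gamma_2(\Delta(\gamma_2)\otimes id) - \gamma_3(id\otimes id\otimes\Delta)$, and into $\gamma_2(id\otimes m_2)$, which collapses to $\gamma_2(id\otimes\Delta(\gamma_2)) - \gamma_3(id\otimes\Delta\otimes id) - \gamma_3(id\otimes id\otimes\Delta)$. Subtracting, the term $\gamma_3(id\otimes id\otimes\Delta)$ cancels and the four surviving terms are precisely $-m_3$. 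The second identity is symmetric: expanding $\gamma_2(m_2\otimes id)$ and $m_2(id\otimes\gamma_2)$ in the same way, the common term $\gamma_3(\Delta\otimes id\otimes id)$ cancels and the remainder is again $-m_3 = 0$.

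The step I expect to require the most care is the Koszul sign bookkeeping in the composites in which a degree $+1$ map is commuted past an identity factor, for example in $\gamma_2(id\otimes\Delta(\gamma_2))$ and $\gamma_2(id\otimes\Delta)$; the sign produced there must match the one carried by the term $\gamma_3(id\otimes\Delta\otimes id)$ coming from the definition of $m_3$, and it is exactly this matching that makes the leftover terms assemble into $-m_3$ rather than some other combination. These are the signs that motivated phrasing everything on $A[1]$, and once they are handled consistently the argument is purely formal: it uses associativity of $\gamma_2$ together with the hypothesis $m_3 = 0$, and needs neither $\Delta^2 = 0$ nor any operation $m_n$ with $n \geq 3$ beyond that hypothesis.
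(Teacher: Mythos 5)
Your proposal is correct and is essentially the paper's own argument: the paper likewise expands $m_2(\gamma_2,id)$ via the definition of $m_2$, uses $m_3=0$ to rewrite $\Delta(\gamma_3)$, and recognizes the remainder as $\gamma_2(id,m_2)$, which is the same computation as showing the difference of the two sides equals $\pm m_3$. (One minor reassurance: since $id$, $\gamma_2$, $\gamma_3$ all have degree $0$, no nontrivial Koszul signs actually arise in these composites, so the sign bookkeeping you flag as delicate is in fact vacuous here.)
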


\begin{proof}
That $\Delta$ has associative order $2$ is equivalent to the identity $$\Delta(\gamma_3)=\gamma_2(\Delta(\gamma_2),id)+\gamma_2(id,\Delta(\gamma_2))-\gamma_3(id,\Delta(id),id).$$ Now we have $$m_2(\gamma_2,id)=\Delta(\gamma_3)-\gamma_2(\Delta(\gamma_2),id)-\gamma_2(\gamma_2,\Delta(id))=$$ $$= \gamma_2(\Delta(\gamma_2),id)+\gamma_2(id,\Delta(\gamma_2))-\gamma_3(id,\Delta(id),id)-\gamma_2(\Delta(\gamma_2),id)-\gamma_2(\gamma_2,\Delta(id))=$$ $$=\gamma_2(id,\Delta(\gamma_2))-\gamma_3(id,\Delta(id),id)-\gamma_2(\gamma_2,\Delta(id)) = \gamma_2(id,m_2).$$ The other identity is proved in the same way.
\end{proof}

\section*{Triviality on $\Delta$-Cohomology}
Given an $A_\infty$-algebra one has an induced structure on cohomology. The structure from Theorem \ref{main theorem} measures the incompatibility of $\Delta$ with an associative product. Since passing to cohomology kills $\Delta$ one can guess that the induced structure is trivial. This is indeed the case and is analogous to the commutative case.

\begin{theorem}
Let $A$ and $\Delta$ be as in Theorem \ref{main theorem}. The operations $m_k$ are trivial on $\Delta$-cohomology.
\end{theorem}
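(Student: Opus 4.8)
The plan is to read ``trivial on $\Delta$-cohomology'' as the statement that the $A_\infty$-structure induced on $H=H(A,\Delta)$ has all of its operations equal to zero, and to obtain this not by showing that the $m_k$ land in boundaries (they do not even preserve cycles, as one sees already for $m_3$) but by exhibiting an explicit $A_\infty$-isomorphism between $(A,\{m_k\})$ of Theorem \ref{main theorem} and the $A_\infty$-algebra $(A,\Delta,0,0,\dots)$, i.e.\ the complex $(A,\Delta)$ with all higher products set to zero. The induced (minimal) structure on cohomology is invariant under $A_\infty$-isomorphism, and the induced structure of $(A,\Delta,0,\dots)$ is visibly trivial: any tree computing a transferred operation of arity $\geq 2$ contains an internal vertex of arity $\geq 2$, and all such vertices are zero, while the induced differential vanishes on cohomology.

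The isomorphism will be given by the total multiplications. First I would check that the collection $F=\{\gamma_n\}_{n\geq 1}$, with $\gamma_1=id$, is an $A_\infty$-morphism from $(A,\{m_k\})$ to $(A,\Delta,0,\dots)$. Since the target carries only a differential, the morphism relations collapse to the single family of identities
$$\Delta(\gamma_n)=\sum_{n=i+j+k\atop k\geq 1, i,j \geq 0}\gamma_{i+1+j}(id^{\otimes i}\otimes m_k \otimes id^{\otimes j}),\qquad n\geq 1.$$
For $n=2$ the right-hand side is $m_2+\gamma_2(\Delta\otimes id)+\gamma_2(id\otimes\Delta)=\Delta(\gamma_2)$ directly from equation \ref{m2}, and the same telescoping persists in general: writing $\delta_r:=\Delta(\gamma_r)$ and substituting the definitions \ref{m2} and \ref{mn} into the sum, every term $\gamma_{*}(\dots\delta_r\dots)$ with $r<n$ occurs exactly twice with opposite coefficients and cancels, leaving only $\delta_n$.

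Because $F_1=\gamma_1=id_A$ is invertible, $F$ is an $A_\infty$-isomorphism, which I would invert by the standard recursion for the inverse of an $A_\infty$-morphism with identity linear term. Hence $(A,\{m_k\})$ and $(A,\Delta,0,\dots)$ have $A_\infty$-isomorphic induced structures on $H(A,\Delta)$. As the latter is the trivial structure $(H,0)$, and as any $A_\infty$-structure isomorphic to a trivial one is itself trivial (compare the morphism relations degree by degree, starting from the identity linear term), all induced operations $\overline{m_k}$ on $\Delta$-cohomology vanish.

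The main obstacle is the verification of the displayed identity for general $n$: bookkeeping, for each fixed shape $\gamma_{i+1+j}(id^{\otimes i}\otimes\cdots\otimes id^{\otimes j})$, the coefficients of all contributions and checking that everything except $\delta_n$ cancels. This is entirely parallel to Case~1 and Case~2 in the proof of Theorem \ref{assoc theorem}, and in fact slightly easier, since the target differential produces only the single outer term $\Delta(\gamma_n)$; so I expect it to follow by the same case analysis on the position and arity of the inner operation $m_k$, using associativity of $\gamma$ to rewrite each $\gamma_p(\dots\gamma_q\dots)$ as a single $\gamma_{p+q-1}$.
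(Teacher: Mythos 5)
Your proposal is correct, but it takes a genuinely different --- and in fact stronger --- route than the paper. The paper's own proof is a two-sentence observation: every term in the defining formula for $m_k$ involves $\Delta$ applied to a product of inputs, and images of $\Delta$ vanish on cohomology. That settles $\overline{m_2}$ immediately (on cycles $m_2(a,b)=\Delta(ab)$ is literally a boundary), but it says nothing about the homotopies entering the transferred operations $\overline{m_k}$ for $k\geq 3$, which are not simply ``$m_k$ evaluated on representatives.'' Your argument closes exactly that gap: the identity $\Delta(\gamma_n)=\sum\gamma_{i+1+j}(id^{\otimes i}\otimes m_k\otimes id^{\otimes j})$ does hold --- after substituting equations \ref{m1}, \ref{m2} and \ref{mn} and using associativity, the coefficient of $\gamma_{p+1+q}(id^{\otimes p}\otimes\Delta(\gamma_r)\otimes id^{\otimes q})$ is $+1$ when $p=q=0$ (giving $\Delta(\gamma_n)$), and otherwise $1-1=0$ or $1-1-1+1=0$ according to whether one or both of $p,q$ are positive; so your phrase ``occurs exactly twice with opposite coefficients'' should read ``occurs with coefficients summing to zero,'' a cosmetic point only. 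This exhibits $\{\gamma_n\}$ (degree $0$, hence admissible morphism components in the paper's uniform degree-$+1$ convention) as an $A_\infty$-isomorphism onto $(A,\Delta,0,\dots)$, whose minimal model is visibly $(H(A,\Delta),0,0,\dots)$; your degree-by-degree argument that a minimal structure isomorphic to the zero one is itself zero is valid. What the paper's approach buys is brevity; what yours buys is an honest trivialization of the entire $A_\infty$-structure, which is the precise non-commutative analogue of the known trivialization of the Koszul hierarchy in the commutative case and which justifies the theorem for all $k$, not just $k=2$.
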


\begin{proof}
On cohomology every element in the image of $\Delta$ is zero. But every term in the definition of $m_k$ contains images of $\Delta$. 
\end{proof}

\section*{Triangular Matrices and Odd Actions}
As a first very concrete example we consider the algebra of upper triangular matrices.

\begin{example}
Let $A$ be the algebra of upper triangular $2\times 2$-matrices. This has a grading where we consider matrices of the form $\begin{pmatrix} a & 0  \\ 0 & b \end{pmatrix}$ as degree $0$ and matrices of the form $\begin{pmatrix} 0 & c  \\ 0 & 0 \end{pmatrix}$ as degree $1$. Now let us consider the differential given by $\Delta\begin{pmatrix} a & c  \\ 0 & b \end{pmatrix}=\begin{pmatrix} 0 & b  \\ 0 & 0 \end{pmatrix}$. It is easy to check that the multiplication and differential respect the grading. We will determine the structure given by Theorem \ref{main theorem} in this case. By definition $m_1$ is exactly $\Delta.$ Note that the $m_k$:s all raise the degree by one, because of this all multiplications involving elements of degree $1$ will vanish since we have nothing in degree $2$. Thus we only need to compute the $m_k$:s on diagonal matrices. The defining formula gives $$m_2\left(\begin{pmatrix} a & 0  \\ 0 & b \end{pmatrix},\begin{pmatrix} c & 0  \\ 0 & d \end{pmatrix}\right)=\Delta \begin{pmatrix} ac & 0  \\ 0 & bd \end{pmatrix}-\begin{pmatrix} 0 & b  \\ 0 & 0 \end{pmatrix}\begin{pmatrix} c & 0  \\ 0 & d \end{pmatrix}-\begin{pmatrix} a & 0  \\ 0 & b \end{pmatrix}\begin{pmatrix} 0 & d  \\ 0 & 0 \end{pmatrix}$$ $$= \begin{pmatrix} 0 & bd  \\ 0 & 0 \end{pmatrix}-\begin{pmatrix} 0 & bd  \\ 0 & 0 \end{pmatrix}-\begin{pmatrix} 0 & ad  \\ 0 & 0 \end{pmatrix}= \begin{pmatrix} 0 & -ad  \\ 0 & 0 \end{pmatrix}.$$
Similarly to determine $m_3$ we apply the definition to obtain $$m_3\left(\begin{pmatrix} a & 0  \\ 0 & b \end{pmatrix},\begin{pmatrix} c & 0  \\ 0 & d \end{pmatrix},\begin{pmatrix} e & 0  \\ 0 & f \end{pmatrix}\right)=\Delta\begin{pmatrix} ace & 0  \\ 0 & bdf \end{pmatrix} $$ $$-\Delta\left(\begin{pmatrix} ac & 0  \\ 0 & bd \end{pmatrix}\right)\begin{pmatrix} e & 0  \\ 0 & f \end{pmatrix}-\begin{pmatrix} a & 0  \\ 0 & b \end{pmatrix}\Delta\left(\begin{pmatrix} ce & 0  \\ 0 & df \end{pmatrix}\right)+\begin{pmatrix} a & 0  \\ 0 & b \end{pmatrix}\Delta\left(\begin{pmatrix} c & 0  \\ 0 & d \end{pmatrix}\right)\begin{pmatrix} e & 0  \\ 0 & f \end{pmatrix}$$ $$=\begin{pmatrix} 0 & bdf  \\ 0 & 0 \end{pmatrix}-\begin{pmatrix} 0 & bdf  \\ 0 & 0 \end{pmatrix}-\begin{pmatrix} 0 & adf  \\ 0 & 0 \end{pmatrix}+\begin{pmatrix} 0 & adf  \\ 0 & 0 \end{pmatrix}=0.$$
By Lemma \ref{order lemma} we can now see that $m_k$ vanishes for all higher $k$.
\end{example}

\begin{remark}
Note that there is nothing really special about this example except that is small and easily computable. We could have chosen any graded associative algebra with any differential.
\end{remark}

The next example creates a degree $+1$ associative algebra from any graded associative algebra with a choice of degree $+1$ left action (for example left multiplication with a degree $+1$ element).

\begin{example}
Consider a dg algebra $A$ with multiplication $\gamma$ and a degree $+1$ element $\xi$. Denote left multiplication with $\xi$ by $L_\xi.$ It is a degree $+1$ endomorphism of $A$ satisfying $L_\xi(\gamma(a,b))=\gamma(L_\xi(a),b).$ Consider the construction in Theorem \ref{assoc theorem} applied to $\Delta=L_{\xi}.$ We see that $m_2(a,b)= (-1)^{|a|} \gamma(a,L_\xi(b))=(-1)^{|a|} a\xi b$ and that $m_3(a,b,c)=L_\xi^2(abc)-L_\xi^2(ab)c-aL_\xi^2(bc)+aL_\xi^2(b)c= \xi^2abc-\xi^2abc-a\xi^2bc+a\xi^2bc=0.$ Now consider an arbitrary degree $+1$ endomorphism $\mathcal{L}$ satisfying $\mathcal{L}(\gamma(a,b))=\gamma(\mathcal{L}(a),b).$ This also induces a strict degree $+1$ associative algebra structure since $m_3$ vanishes by applying $\mathcal{L}(\gamma(a,b))=\gamma(\mathcal{L}(a),b).$
\end{example}

\section*{$A_\infty$-structure on the Bar Complex}
An alternative characterization of $A_\infty$-structure is the following.
\begin{theorem}
An $A_\infty$-structure on $A$ is equivalent to a degree $+1$ square-zero coderivation of the reduced tensor coalgebra on $A[1]$. An $A_\infty$-structure $\{m_n\}$ correspond to the coderivation $$\Delta = \sum_{i=1}^n\sum_{j=0}^{n-i}id^{\otimes j}\otimes m_i \otimes id^{\otimes n-i-j}.$$
\end{theorem}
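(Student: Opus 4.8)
The plan is to exploit the cofreeness of the reduced tensor coalgebra and reduce the square-zero condition to a statement about the corestriction onto the cogenerators. Write $\bar{T} = T(A[1]) = \bigoplus_{n\geq 1} A[1]^{\otimes n}$, equipped with the deconcatenation coproduct $\delta$ of the earlier Definition, and let $\pi \colon \bar T \to A[1]$ be the projection onto the length-one component. I would split the statement into two independent parts: first, that degree $+1$ coderivations of $\bar T$ are in natural bijection with arbitrary collections of degree $+1$ maps $\{m_n \colon A[1]^{\otimes n}\to A[1]\}$, realized by the displayed formula; and second, that under this bijection the square-zero coderivations correspond precisely to the collections satisfying the $A_\infty$ relations of the earlier Definition.

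For the first part I would establish the universal property $\mathrm{Coder}(\bar T) \cong \mathrm{Hom}(\bar T, A[1])$ given by $D \mapsto \pi \circ D$. One direction is to check that the operator $$D = \sum_{i=1}^n\sum_{j=0}^{n-i}id^{\otimes j}\otimes m_i \otimes id^{\otimes n-i-j}$$ (the coderivation $\Delta$ of the statement) really does satisfy the co-Leibniz identity $\delta \circ D = (D\otimes id + id \otimes D)\circ \delta$; this is routine, since applying a single $m_i$ inside a tensor word and then deconcatenating agrees with first deconcatenating and then applying $m_i$ to whichever factor contains it. Conversely, I would show that a coderivation is determined by its corestriction $\pi \circ D$: iterating the coproduct lets one read off every component $D_n^{(r)}\colon A[1]^{\otimes n}\to A[1]^{\otimes r}$ from $\pi\circ D$, so the map $D\mapsto \pi\circ D$ is injective, and the formula above exhibits the inverse with $\pi\circ D = \sum_n m_n$. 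This simultaneously proves the asserted explicit correspondence between $\{m_n\}$ and the coderivation.

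For the second part I would first note that since $D$ has odd degree $+1$, one has $D^2 = \tfrac12[D,D]$, and because the graded commutator of coderivations is again a coderivation (and we work in characteristic zero), $D^2$ is itself a coderivation of $\bar T$. By the injectivity established above, $D^2 = 0$ if and only if $\pi \circ D^2 = 0$. It then remains to compute $\pi\circ D^2$ restricted to $A[1]^{\otimes n}$ and identify it with the left-hand side of the $A_\infty$ identity. Only the composites $m_{i+1+j}\circ(id^{\otimes i}\otimes m_k\otimes id^{\otimes j})$ with $i+j+k=n$ fuse the entire word back into $A[1]$ and hence survive $\pi$; every other composite lands in a component of length $\geq 2$ and is annihilated. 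The Koszul signs incurred when sliding the degree-$+1$ operators $m_k$ across the $id$-factors are exactly those absorbed by the shift to $A[1]$, so the surviving sum is $\sum_{n=i+j+k,\,k\geq 1,\,i,j\geq 0} m_{i+1+j}(id^{\otimes i}\otimes m_k \otimes id^{\otimes j})$, the sign-free relation of the Definition. This yields $D^2=0 \iff \{m_n\}$ is an $A_\infty$-structure, completing the equivalence. I expect the main obstacle to be the bookkeeping in this last step: verifying carefully that passing to the cogenerators kills precisely the longer components of $D^2$, and reconciling the Koszul signs produced by the composition with the sign-free form of the $A_\infty$ relation. Once that combinatorial identification is in place, the rest follows formally from cofreeness.
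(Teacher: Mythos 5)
Your argument is correct, and it is precisely the standard proof: the paper itself does not prove this theorem but simply cites Loday--Vallette, where the result is established exactly by the cofreeness argument you give (coderivations of the reduced tensor coalgebra are determined by their corestriction $\pi\circ D$ to the cogenerators, $D^2$ is again a coderivation since $D$ is odd, and $\pi\circ D^2$ restricted to $A[1]^{\otimes n}$ recovers the shifted, sign-free $A_\infty$ relation). The only place needing care is the one you flag yourself --- checking that the Koszul signs from evaluating $id^{\otimes i}\otimes m_k\otimes id^{\otimes j}$ on elements are exactly what the shift to $A[1]$ demands --- and your treatment of it is the accepted one.
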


\begin{proof}
See for example \cite{LodayVallette}.
\end{proof}

This coderivation is however not necessarily a derivation with respect to the tensor product, enabling us to apply Theorem \ref{main theorem}.

\begin{theorem}
Given an $A_\infty$-structure $\{m_n\}$ on $A[1]$ there is an $A_\infty$-structure $\{t_i\}$ on the shifted reduced tensor algebra $T(A)[1]$. We have $t_1(v_1\otimes \dots \otimes v_n) =$ $$\sum_{i=1}^n\sum_{j=0}^{n-i}(-1)^{|v_1|+\dots+|v_j|}v_1\otimes \dots v_j \otimes m_i(v_{j+1}\otimes \dots \otimes v_{j+i}) \otimes v_{j+i+1}\otimes \dots \otimes v_n,$$ $t_2(v_1\otimes \dots \otimes v_n\bigotimes w_1\otimes \dots \otimes w_m)=$ $$\sum_{i<n\atop j <m}(-1)^{|v_1|+\dots+|v_i|} v_1\otimes \dots v_i \otimes m_{n+m-i-j}(v_{i+1}\otimes \dots \otimes w_{m-j}) \otimes w_{m-j+1}\otimes \dots \otimes w_m $$ and for $k\geq 3$ we have $$t_k(u_{11}\otimes \dots \otimes u_{1n_1}\bigotimes  \dots \bigotimes u_{k1}\otimes \dots \otimes u_{kn_k})=\sum_{i<n_1\atop j <n_k} (-1)^{|u_{11}|+\dots+|u_{1i}|}$$ $$u_{11}\otimes \dots u_{1i} \otimes m_{\# inputs -i-j}(u_{1(i+1)}\otimes \dots \otimes u_{k(n_k-j)}) \otimes u_{k(n_k-j+1)}\otimes \dots \otimes u_{kn_k}.$$
\end{theorem}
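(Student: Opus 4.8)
The plan is to recognize that the final theorem is nothing more than an instance of Theorem \ref{main theorem} applied to a specific algebra-with-differential, so the bulk of the work is identifying the correct inputs and then unwinding the definitions. The graded associative algebra in question is the reduced tensor algebra $T(A)$ with concatenation as $\gamma_2$, and the degree $+1$ differential is the bar coderivation $\Delta$ from the preceding characterization theorem, namely $\Delta = \sum_{i}\sum_{j} id^{\otimes j}\otimes m_i \otimes id^{\otimes n-i-j}$. The key observations, which I would state first, are that $\Delta$ squares to zero precisely because $\{m_n\}$ is an $A_\infty$-structure (this is the content of the characterization theorem), and that $\Delta$ is a coderivation of the coproduct but emphatically \emph{not} a derivation with respect to the concatenation product $\gamma_2$ — the failure to be a derivation is exactly what generates the higher operations. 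Once these are in place, Theorem \ref{main theorem} immediately guarantees the existence of an $A_\infty$-structure $\{t_i\}$ on $T(A)[1]$, and the theorem statement is really just an explicit computation of what the universal formulas $t_i = \Delta(\gamma_i) - \dots$ produce in this concrete setting.

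The main body of the proof would therefore be the three separate verifications that the stated formulas for $t_1$, $t_2$, and $t_k$ agree with the formulas of Theorem \ref{main theorem}. For $t_1 = \Delta$ this is immediate: applying $\Delta$ to a single tensor word $v_1\otimes\cdots\otimes v_n$ reproduces the displayed sum, with the Koszul sign $(-1)^{|v_1|+\cdots+|v_j|}$ arising from moving the degree $+1$ operator $m_i$ past the first $j$ entries. For $t_2 = \Delta(\gamma_2) - \gamma_2(\Delta\otimes id) - \gamma_2(id\otimes\Delta)$ I would concatenate two words and observe that $\Delta$ applied to the concatenation splits into three families of terms: those where $m_i$ lands entirely inside the first word, entirely inside the second word, or straddles the junction. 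The first two families are exactly canceled by the two correction terms $\gamma_2(\Delta\otimes id)$ and $\gamma_2(id\otimes\Delta)$, leaving only the straddling terms, which is precisely the displayed sum with the index constraints $i < n$ and $j < m$ enforcing that $m_{n+m-i-j}$ genuinely crosses the boundary.

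For the general $t_k$ with $k \geq 3$, the formula from Theorem \ref{main theorem} is $\Delta(\gamma_k) - \gamma_2(\Delta(\gamma_{k-1})\otimes id) - \gamma_2(id\otimes\Delta(\gamma_{k-1})) + \gamma_3(id\otimes\Delta(\gamma_{k-2})\otimes id)$, and the combinatorial heart of the argument is an inclusion–exclusion over which blocks the surviving copy of $m$ is allowed to touch. The terms that survive all four contributions are exactly those in which the single $m$ operation straddles the entire span from inside the first block to inside the last block — that is, it must consume at least one entry of $u_{1\ast}$ and at least one entry of $u_{k\ast}$, which is encoded by the constraints $i < n_1$ and $j < n_k$. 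I expect the main obstacle to be bookkeeping the signs and verifying this block-crossing cancellation carefully: one must check that an $m$ which fails to reach into both the first and the last block is removed by precisely one or two of the correction terms and thus contributes zero net coefficient, while an $m$ spanning all blocks survives with coefficient $+1$, matching the leading Koszul sign $(-1)^{|u_{11}|+\cdots+|u_{1i}|}$. This is the same style of coefficient-counting that appears in Case 1 and Case 2 of the proof of Theorem \ref{assoc theorem}, so I would model the argument on that, treating the correction terms $\gamma_2$ and $\gamma_3$ as inserting formal boundaries that the operation $m$ must cross.
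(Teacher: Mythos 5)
Your proposal is correct and follows essentially the same route as the paper: it identifies the structure as Theorem \ref{main theorem} applied to $T(A)$ with concatenation and the bar coderivation, and then unwinds $t_1$, $t_2$, and $t_k$ via the same inclusion--exclusion over which tensor blocks the surviving $m$ touches. The paper's proof is exactly this block-crossing cancellation argument, stated slightly more informally.
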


\begin{proof}
By definition $t_1$ is just application of $\Delta$, the signs originate from the Koszul sign rule. The definition of $t_2$ is $$t_2(v_1\otimes \dots \otimes v_n\bigotimes w_1\otimes \dots \otimes w_m)=\Delta(v_1\otimes \dots \otimes v_n\bigotimes w_1\otimes \dots \otimes w_m)-$$ $$\Delta(v_1\otimes \dots \otimes v_n)\bigotimes w_1\otimes \dots \otimes w_m-(-1)^{|v_1|+\dots+|v_n|}v_1\otimes \dots \otimes v_n\bigotimes\Delta( w_1\otimes \dots \otimes w_m).$$ By the definition of $\Delta$ we see that the first term consists of all the way we can contract using the multiplication and the other terms are contractions using only elements from the first respectively the second tensor word. The remaining terms are thus the contractions involving elements from both words. This can be written as $t_2(v_1\otimes \dots \otimes v_n\bigotimes w_1\otimes \dots \otimes w_m)=$ $$\sum_{i<n\atop j <m}(-1)^{|v_1|+\dots+|v_i|} v_1\otimes \dots v_i \otimes m_{n+m-i-j}(v_{i+1}\otimes \dots \otimes w_{m-j}) \otimes w_{m-j+1}\otimes \dots \otimes w_m.$$ The definition of $t_k$ for $k\geq 3$ is $t_k(u_{11}\otimes \dots \otimes u_{1n_1}\bigotimes  \dots \bigotimes u_{k1}\otimes \dots \otimes u_{kn_k})=$ $$\Delta(u_{11}\otimes \dots \otimes u_{1n_1}\bigotimes\dots \bigotimes u_{k1}\otimes \dots \otimes u_{kn_k})-$$ $$\Delta(u_{11}\otimes \dots \otimes u_{1n_1}\bigotimes \dots)\bigotimes u_{k1}\otimes \dots \otimes u_{kn_k}$$ $$-(-1)^{|u_{11}|+\dots+|u_{1n_1}|}u_{11}\otimes \dots \otimes u_{1n_1}\bigotimes\Delta(\dots \bigotimes u_{k1}\otimes \dots \otimes u_{kn_k})$$ $$+(-1)^{|u_{11}|+\dots+|u_{1n_1}|}u_{11}\otimes \dots \otimes u_{1n_1}\bigotimes\Delta(\dots )\bigotimes u_{k1}\otimes \dots \otimes u_{kn_k}.$$ Similarly to the previous case we see that the first term corresponds to any contractions, the second term corresponds to contractions not involving the last tensor word, the third corresponds to contractions not involving the first word and the fourth to contractions not involving the first or last word. Thus we see that the remaining terms are only the contractions involving elements both from the first and the last tensor word. Alternatively this can be written as $$t_k(u_{11}\otimes \dots \otimes u_{1n_1}\bigotimes  \dots \bigotimes u_{k1}\otimes \dots \otimes u_{kn_k})=\sum_{i<n_1\atop j <n_k} (-1)^{|u_{11}|+\dots+|u_{1i}|}$$ $$u_{11}\otimes \dots u_{1i} \otimes m_{\# inputs -i-j}(u_{1(i+1)}\otimes \dots \otimes u_{k(n_k-j)}) \otimes u_{k(n_k-j+1)}\otimes \dots \otimes u_{kn_k}.$$
\end{proof}

\begin{remark}
Note that $t_1$ is the differential of the bar construction. If $m_i=0$ for $i\geq  3$ we see that $t_i$ vanishes for $i\geq 3$ making the bar resolution complex into a dg algebra (with odd degree). In this case the product is easily described by $t_2(v_1\otimes \dots \otimes v_n\bigotimes w_1\otimes \dots \otimes w_m)=$ $$ (-1)^{|v_1|+\dots+|v_{n-1}|} v_1\otimes \dots v_{n-1} \otimes m_2(v_{n} \otimes w_1) \otimes w_{2}\otimes \dots \otimes w_m$$ and Theorem \ref{compatibility theorem} says that there is a compatibility with the tensor product (which is also easy to prove directly).
\end{remark}

\section*{Hochschild Cocomplex and the Dualized Connes' $B$-Operator}
For an introduction to Hochschild cohomology, see for example \cite{Loday}. Consider a finite-dimensional associative unital algebra $A$ with a symmetric, invariant non-degenerate inner product $<,>.$ In \cite{Tradler}, a degree $-1$ differential $\Delta$ is considered on Hochschild cochains $C^\bullet(A,A)$ defined by $$<\Delta f(a_1,\dots,a_{n-1}),a_n>=\sum_{i=1}^n(-1)^{i(n-1)}<f(a_i,\dots,a_{n-1},a_n,a_1,\dots,a_{i-1}),1>.$$ This operator is Connes' $B$-operator transferred from chains to cochains by using the inner product. The following is shown in \cite{Tradler}.

\begin{theorem}
\label{tradler}
$\Delta$ is a chain map with respect to the Hochschild differential and $\Delta^2=0.$ Furthermore, on Hochschild cohomology the following identity holds $$[a,b]=-(-1)^{(|a|-1)|b|+1}\left(\Delta(a\smile b) -\Delta(a)\smile b -(-1)^{|a|} a\smile \Delta(b) \right),$$ where $[,]$ is the Gerstenhaber bracket  and $\smile$ is the cup product as defined in \cite{Gerstenhaber}.
\end{theorem}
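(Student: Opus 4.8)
The statement is taken from \cite{Tradler}; the plan is to reconstruct its proof in three stages matching the three assertions. For the first two---that $\Delta$ is a chain map for the Hochschild differential $\delta$ and that $\Delta^2=0$---I would avoid direct manipulation of the defining cyclic sum and instead pass through the classical $(b,B)$ mixed complex. The crucial structural input is that the symmetric, invariant, non-degenerate form induces a perfect pairing between cochains $C^\bullet(A,A)$ and chains $C_\bullet(A)=\bigoplus_{n\geq 0}A^{\otimes(n+1)}$ by $\langle\langle f,\,a_0\otimes\cdots\otimes a_n\rangle\rangle:=\langle f(a_1,\dots,a_n),a_0\rangle$. First I would check that, under this pairing, the Hochschild coboundary $\delta$ is adjoint to the Hochschild boundary $b$---here invariance $\langle ab,c\rangle=\langle a,bc\rangle$ together with symmetry are precisely what make the boundary terms match---and that the cyclic sum defining $\Delta$ is exactly the adjoint of Connes' operator $B=(1-t)sN$.

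With the dictionary $\Delta=B^\vee$ and $\delta=b^\vee$ established, the first two claims follow formally from the mixed-complex relations $B^2=0$ and $bB+Bb=0$ recorded in \cite{Loday}: dualizing yields $\Delta^2=0$ and $\delta\Delta+\Delta\delta=0$, the latter being the chain-map assertion. I expect this stage to be essentially bookkeeping once the adjunction is fixed; the only delicate point is reconciling the signs $(-1)^{i(n-1)}$ in the definition of $\Delta$ with the signs built into $B$.

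The third assertion is the genuine content and the main obstacle. Observe first that its right-hand side, $\Delta(a\smile b)-\Delta(a)\smile b-(-1)^{|a|}a\smile\Delta(b)$, is exactly the operation $m_{\Delta,2}$ of Theorem \ref{assoc theorem} applied to the cup product, so the claim is that this $m_2$ computes the Gerstenhaber bracket on cohomology. My plan is a cochain-level computation: write out explicit formulas for $a\smile b$, for the Gerstenhaber circle product $a\circ b$ (so $[a,b]=a\circ b-(-1)^{(|a|-1)(|b|-1)}b\circ a$), and for $\Delta$ via its defining cyclic sum paired against a test element, then expand the combination above. Using invariance and symmetry of the form to rotate arguments under the pairing, the cyclic sums reorganize so that one block of terms reproduces $[a,b]$ up to the stated sign while the rest telescope. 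The hard part will be showing that this remainder is a Hochschild coboundary: I would construct an explicit bilinear homotopy $h(a,b)$ out of partial cyclic rotations with $\delta h(a,b)$ equal to the surplus terms, so that the identity holds after passing to cohomology. Tracking signs uniformly across the cyclic sums and pinning down $h$ is where essentially all the work concentrates; the remainder of the argument is formal adjunction.
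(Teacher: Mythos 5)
The paper does not actually prove this theorem: its proof is the single line ``See \cite{Tradler}.'' So the comparison is between your reconstruction and Tradler's published argument rather than anything carried out in this paper. Your first stage --- identifying $\Delta$ as the adjoint of Connes' operator $B$ and $\delta$ as the adjoint of the Hochschild boundary $b$ under the pairing $\langle\langle f,\,a_0\otimes\cdots\otimes a_n\rangle\rangle=\langle f(a_1,\dots,a_n),a_0\rangle$, then importing $B^2=0$ and $bB+Bb=0$ from the mixed complex --- is sound and is essentially how the first two assertions are obtained; the sign reconciliation between $(-1)^{i(n-1)}$ and the signs in $B=(1-t)sN$ that you flag is real but routine, and symmetry plus invariance of the form are indeed exactly what make $b$ and $\delta$ adjoint.

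The gap is in the third assertion, which is the entire content of the theorem. You correctly observe that the right-hand side is $m_{\Delta,2}$ for the cup product and that one must show it computes the Gerstenhaber bracket modulo a Hochschild coboundary, but you do not produce the homotopy $h(a,b)$: you only assert that one ``would construct'' it out of partial cyclic rotations and that $\delta h(a,b)$ ``equals the surplus terms.'' That construction is not bookkeeping --- it is precisely where the theorem lives (it is the chain-level input behind the statement that the Hochschild cohomology of a symmetric Frobenius algebra is a BV algebra), and there is no a priori reason the surplus terms form a coboundary other than the explicit formula one must exhibit and verify. Until you write down $h$ and check $\delta h(a,b)=[a,b]+(-1)^{(|a|-1)|b|+1}\,m_{\Delta,2}(a,b)$ (with the signs pinned down against Gerstenhaber's conventions for $\smile$ and $\circ$), the identity on cohomology remains unproven. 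As it stands your proposal is a plausible road map whose decisive step has not been taken.
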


\begin{proof}
See \cite{Tradler}.
\end{proof}

It is clear that the construction in Theorem \ref{main theorem} works if we reverse gradings and in that case we obtain a homologically graded $A_\infty$-structure. We can therefore apply that machinery to $C^\bullet(A,A)$ equipped with the cup product and the differential $\Delta.$ Doing this we obtain an $A_\infty$-structure on the chain level.

\begin{theorem}
There is a homologically graded $A_\infty$-structure on $C^\bullet(A,A)[-1]$ such that it induces an $A_\infty$-structure given by maps $\{m_n\}_{n\geq1}$ on the cohomology $HH^\bullet(A,A)[-1]$ such that $m_1=\Delta$ and $m_2$ is the Gerstenhaber bracket up to a sign.
\end{theorem}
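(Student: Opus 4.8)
The plan is to apply the machinery already established, combined with the two inputs from Theorem \ref{tradler}. The statement has two essentially independent pieces: first, that there is an honest $A_\infty$-structure on the chain-level complex $C^\bullet(A,A)[-1]$; second, the identification of the induced operations $m_1$ and $m_2$ on cohomology. Let me sketch both.

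For the existence of the chain-level $A_\infty$-structure, the key observation is that Theorem \ref{tradler} guarantees $\Delta^2=0$, so $\Delta$ is genuinely a differential (in the homologically graded sense, i.e.\ degree $-1$). We have the cup product $\smile$ as an associative graded product on $C^\bullet(A,A)$. Since the excerpt already remarks that Theorem \ref{main theorem} ``works if we reverse gradings,'' I would first note that the entire derivation of Theorem \ref{main theorem} (and hence Theorem \ref{assoc theorem}) is formal: it only uses associativity of the product and $\Delta^2=0$, never the sign of the degree of $\Delta$. So I would simply invoke Theorem \ref{main theorem} with $\gamma_2=\smile$ and the given $\Delta$, taking care to shift by $[-1]$ rather than $[1]$, to obtain a collection $\{m_n\}$ forming a homologically graded $A_\infty$-structure on $C^\bullet(A,A)[-1]$. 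This step is essentially a citation of the earlier theorem plus a bookkeeping remark about grading conventions.

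For the induced structure on cohomology, I would invoke the general principle (stated in the excerpt's section on $\Delta$-cohomology and standard in the $A_\infty$ literature) that an $A_\infty$-algebra transfers to an $A_\infty$-structure on the cohomology of its differential via the homotopy transfer theorem. Here the relevant differential is $m_1=\Delta$, and by Theorem \ref{tradler} $\Delta$ is a chain map with respect to the Hochschild differential, so $\Delta$-cohomology is well-defined and compatible with passing to $HH^\bullet(A,A)$. The transferred $m_1$ is again $\Delta$ on cohomology. For $m_2$: by the definition in Theorem \ref{main theorem} we have $m_2=\Delta(\gamma_2)-\gamma_2(\Delta\otimes id)-\gamma_2(id\otimes\Delta)$, which with $\gamma_2=\smile$ is exactly the combination $\Delta(a\smile b)-\Delta(a)\smile b-(-1)^{|a|}a\smile\Delta(b)$ appearing in Theorem \ref{tradler}. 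That theorem identifies this expression, on cohomology, with the Gerstenhaber bracket up to the explicit sign $-(-1)^{(|a|-1)|b|+1}$. Hence the induced $m_2$ agrees with $[\,,\,]$ up to sign, which is precisely the claim.

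The main obstacle I anticipate is bookkeeping rather than conceptual: reconciling the grading conventions and sign factors between the homologically graded $A_\infty$ framework (with its degree $-1$ $\Delta$ and the shift $[-1]$) and the cohomological conventions of \cite{Tradler} and \cite{Gerstenhaber}. In particular, one must verify that the homotopy transfer produces $m_2$ as literally the class of $m_2$ from Theorem \ref{main theorem} (i.e.\ that no correction terms survive), which holds here because all higher correction terms involve images of $\Delta$ and vanish on $\Delta$-cohomology, exactly as in the triviality argument of the ``Triviality on $\Delta$-Cohomology'' section. The only genuine subtlety is tracking that the sign in Theorem \ref{tradler} matches the sign convention in the definition of $m_2$ after the grading reversal; I would handle this by comparing the two expressions term by term and absorbing the discrepancy into the phrase ``up to a sign,'' which is all the statement asserts.
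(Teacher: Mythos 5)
Your first step --- obtaining the chain-level $A_\infty$-structure on $C^\bullet(A,A)[-1]$ by invoking Theorem \ref{main theorem} with $\gamma_2=\smile$ and the dualized $B$-operator $\Delta$ (using $\Delta^2=0$ from Theorem \ref{tradler} and reversing the grading) --- is exactly what the paper does, and is fine.

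The second step contains a genuine confusion between the two differentials in play. You propose to pass to cohomology via homotopy transfer along $m_1=\Delta$, i.e.\ to $\Delta$-cohomology. But $HH^\bullet(A,A)$ is the cohomology of the Hochschild coboundary $\delta$, which is \emph{not} part of the $A_\infty$-structure at all; it is a separate differential with respect to which both $\Delta$ and $\smile$ (and hence every $m_n$) are chain maps. The paper's argument is therefore not a transfer but a direct descent: since each $m_n$ commutes with $\delta$, the operations induce well-defined maps on $HH^\bullet(A,A)$ still satisfying the $A_\infty$-relations, with $m_1$ the induced action of $\Delta$ and $m_2$ identified with the Gerstenhaber bracket (up to sign) by Theorem \ref{tradler}, which is itself a statement on Hochschild cohomology. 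Your route, taken literally, would be self-defeating: transferring along $\Delta$ yields zero transferred differential (not ``again $\Delta$''), and by the paper's own ``Triviality on $\Delta$-Cohomology'' theorem --- which you in fact cite --- \emph{all} the operations, including $m_2$, vanish on $\Delta$-cohomology, so you could never recover the (generally non-zero) Gerstenhaber bracket there. Replace the homotopy-transfer step with the observation that the $m_n$ are $\delta$-chain maps and descend to $HH^\bullet(A,A)$, and the rest of your identification of $m_2$ via Theorem \ref{tradler} goes through as you wrote it.
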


\begin{proof}
The $A_\infty$-structure is built from the operator $\Delta$ and the cup product $\smile$. Since both are compatible with the Hochschild coboundary $\delta$, the $A_\infty$-structure is well defined on the cohomology. By definition $m_1$ is given by $\Delta$ and by unwinding the definition of $m_2$ we see that Theorem \ref{tradler} shows that $m_2$ is the Gerstenhaber bracket up to a sign.
\end{proof}

\section*{Acknowledgements}
I would like to thank J. Alm, A. Berglund, T. Backman and S. Merkulov for interesting discussions. A special thanks to S. Merkulov for many useful suggestions and comments during the preparation of this paper.

\begin{bibdiv}
    \begin{biblist}[\normalsize]
    

\bib{Akman}{article}{
   author={Akman, F{\"u}sun},
   title={On some generalizations of Batalin-Vilkovisky algebras},
   journal={J. Pure Appl. Algebra},
   volume={120},
   date={1997},
   number={2},
   pages={105--141},
   issn={0022-4049},
}


\bib{Bering}{article}{
   author={Bering, K.},
   title={Non-commutative Batalin-Vilkovisky algebras, homotopy Lie algebras
   and the Courant bracket},
   journal={Comm. Math. Phys.},
   volume={274},
   date={2007},
   number={2},
   pages={297--341},
   issn={0010-3616},
}


\bib{Gerstenhaber}{article}{
   author={Gerstenhaber, Murray},
   title={The cohomology structure of an associative ring},
   journal={Ann. of Math. (2)},
   volume={78},
   date={1963},
   pages={267--288},
   issn={0003-486X},
}


\bib{Kontsevich}{article}{
   author={Kontsevich, Maxim},
   title={Formal (non)commutative symplectic geometry},
   conference={
      title={The Gel\cprime fand Mathematical Seminars, 1990--1992},
   },
   book={
      publisher={Birkh\"auser Boston},
      place={Boston, MA},
   },
   date={1993},
   pages={173--187},
}


\bib{Koszul}{article}{
   author={Koszul, Jean-Louis},
   title={Crochet de Schouten-Nijenhuis et cohomologie},
   note={The mathematical heritage of \'Elie Cartan (Lyon, 1984)},
   journal={Ast\'erisque},
   date={1985},
   number={Numero Hors Serie},
   pages={257--271},
   issn={0303-1179},
}


\bib{Loday}{book}{
   author={Loday, Jean-Louis},
   title={Cyclic homology},
   series={Grundlehren der Mathematischen Wissenschaften [Fundamental
   Principles of Mathematical Sciences]},
   volume={301},
   edition={2},
   publisher={Springer-Verlag},
   place={Berlin},
   date={1998},
   pages={xx+513},
   isbn={3-540-63074-0},
}


\bib{LodayVallette}{book}{
   author={Loday, Jean-Louis},
   author={Vallette, Bruno},
   title={Algebraic operads},
   series={Grundlehren der Mathematischen Wissenschaften [Fundamental
   Principles of Mathematical Sciences]},
   volume={346},
   publisher={Springer},
   place={Heidelberg},
   date={2012},
   pages={xxiv+634},
   isbn={978-3-642-30361-6},
}


\bib{Tradler}{article}{
   author={Tradler, Thomas},
   title={The Batalin-Vilkovisky algebra on Hochschild cohomology induced by
   infinity inner products},
   journal={Ann. Inst. Fourier (Grenoble)},
   volume={58},
   date={2008},
   number={7},
   pages={2351--2379},
   issn={0373-0956},
}

\bib{Voronov}{article}{
   author={Voronov, Theodore},
   title={Higher derived brackets and homotopy algebras},
   journal={J. Pure Appl. Algebra},
   volume={202},
   date={2005},
   number={1-3},
   pages={133--153},
   issn={0022-4049},
}
    \end{biblist}
    \end{bibdiv}

~\\

\noindent\textsc{Department of Mathematics \\ Stockholm University \\ 106 91 Stockholm, Sweden \\
\emph{E-mail address:} {\bf kaj@math.su.se}}

\end{document}